\documentclass[11pt]{amsart}
\usepackage[margin=1.2in]{geometry}
\usepackage{amsmath,amssymb,amsthm}
\usepackage{booktabs}
\usepackage[colorlinks=true,linkcolor=blue,citecolor=blue,urlcolor=blue]{hyperref}

\newtheorem{theorem}{Theorem}[section]
\newtheorem{proposition}[theorem]{Proposition}

\theoremstyle{definition}
\newtheorem{remark}[theorem]{Remark}

\newcommand{\F}{\mathbb{F}}
\newcommand{\Z}{\mathbb{Z}}
\newcommand{\C}{\mathbb{C}}
\newcommand{\Q}{\mathbb{Q}}
\renewcommand{\P}{\mathsf{P}}
\newcommand{\Ball}[1]{B(#1)}
\newcommand{\supp}{\operatorname{supp}}
\newcommand{\BOX}{\mathrm{Box}}

\title[One-sided units at radius four]{The one-sided unit count
of $\F_2[\P]$ at radius four,\\ and an integral separation theorem}
\author{Moe Tabei}
\address{Independent researcher, Japan}
\email{tabei@ryun.jp}
\date{\today}
\subjclass[2020]{16S34 (primary); 20C07, 68V15 (secondary)}
\keywords{Kaplansky unit conjecture, Higman's conjecture, group ring,
Hantzsche--Wendt group, one-sided units, SAT, DRAT certificate}

\begin{document}

\begin{abstract}
Let $\P$ be the Hantzsche--Wendt (Promislow) group and $\Ball4$ the radius-four
ball in its standard word metric. Dietrich, Lee, Nies and Vinyals determined the
\emph{two-sided} count: exactly $36$ nontrivial units $u$ of $\F_2[\P]$ with
both $\supp(u)$ and $\supp(u^{-1})$ in $\Ball4$. We determine the
\emph{one-sided} count: exactly $52$ nontrivial units with $\supp(u)\subseteq
\Ball4$ and no constraint on the inverse. The $16$ new units have inverses
supported at radius exactly $5$; they form two orbits of size $8$ under the
symmetry group fixing the generating set, and all $52$ units have support size
$21$ on both sides. Completeness is a single propositional unsatisfiability,
certified by a \textsc{drat} proof checked with \texttt{drat-trim}. As an
arithmetic consequence we prove: \emph{no unit of $\Z[\P]$ with support in
$\Ball4$ has nontrivial reduction modulo $2$} --- with no bound on the
coefficients or on the support of the inverse. Since $\F_2[\P]$ has $52$
nontrivial units on that ball, this separates, in the untwisted setting, the
integral group ring from its
characteristic-two quotient at the first radius where the unit conjecture fails
over a field.
\end{abstract}

\maketitle

\section{Introduction}

Gardam's counterexample to the Kaplansky unit conjecture \cite{Gardam2021} is a
unit of $\F_2[\P]$, where
\[
  \P=\langle a,b \mid b^{-1}a^2b=a^{-2},\; a^{-1}b^2a=b^{-2}\rangle
\]
is the Hantzsche--Wendt (Promislow, Passman) group. Over $\Z$ --- Higman's
original setting \cite{Higman1940} --- the conjecture remains open, and $\P$ is
its natural first battleground. Two computational landmarks frame the present
paper. Gardam's unit has word radius exactly $5$, and a conjugate of it is supported
in the ball of radius $4$; and Dietrich, Lee, Nies and Vinyals
\cite{DLNV2026} enumerated all \emph{two-sided} nontrivial units at radius $4$:
exactly $36$, all with support sizes $(21,21)$. Their encoding expresses ``the
unit and its inverse'' as one satisfying assignment, so a unit whose inverse
leaves the ball is outside what it can see; the count $36$ is exact for the
two-sided question and a lower bound for the one-sided one. Their encoding constrains both
$u$ and $u^{-1}$ to the ball; whether a unit supported in $\Ball4$ could have an
inverse escaping the ball was left open, and the companion paper
\cite{Tabei4} isolated this one-sided question as the frontier input for the
integral problem at radius $4$.

This paper settles the one-sided question and draws the integral consequence.

\begin{theorem}[one-sided count]\label{thm:census}
There are exactly $52$ nontrivial units $u$ of $\F_2[\P]$ with
$\supp(u)\subseteq\Ball4$: the $36$ two-sided units of \cite{DLNV2026}, and
$16$ units whose inverses are supported at radius exactly $5$. All $52$ have
$|\supp(u)|=|\supp(u^{-1})|=21$. The $16$ new units form two orbits of size
$8$ under the group of automorphisms of $\P$ preserving
$\{a^{\pm1},b^{\pm1}\}$.
\end{theorem}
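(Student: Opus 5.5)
The plan is computational, in three stages: a finite reduction, an exhaustive SAT search with a certified unsatisfiability proof, and a verification and orbit computation on the explicit list.

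\textbf{Finite reduction.} Since $\P$ is torsion-free and virtually abelian, I first bound the inverse. I realize $\P$ as a crystallographic group $\Z^3\rtimes(\Z/2)^2$ via its faithful affine representation. If $uv=1$ with $\supp(u)\subseteq\Ball4$, then $v$ is determined by a linear system over $\F_2$ on the $\Z^3$-cosets. The finite-index abelian subgroup $\Z^3$ makes $\F_2[\P]$ a free module of rank $4$ over $\F_2[\Z^3]$. In that picture, $u$ becomes a $4\times4$ matrix $M(u)$ of Laurent polynomials, and $v$ is $M(u)^{-1}$ with a monomial determinant (Laurent units of $\F_2[\Z^3]$ are monomials). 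The adjugate formula then bounds the Newton polytope of $v$ by three times that of $u$, up to a shift. This gives an explicit radius $R$ (perhaps $12$) with $\supp(u^{-1})\subseteq \Ball{R}$.

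\textbf{SAT search.} I encode the one-sided problem. The variables are $x_g$ for $g\in\Ball4$ and $y_h$ for $h\in\Ball{R}$. The constraints are the XOR equations $\sum_{gh=k}x_gy_h=[k=1]$ for every $k\in\Ball{4+R}$, with products Tseitin-encoded. I add a nontriviality clause excluding $\supp(u)$ a singleton, and symmetry breaking by the automorphism group preserving $\{a^{\pm1},b^{\pm1}\}$ together with left translation normalization (say $1\in\supp(u)$ only where sound; otherwise I do no translation breaking, since the ball is not translation-invariant). Iterated enumeration with blocking clauses on the $x$-variables lists the solutions. When the final call returns \textsc{unsat}, I have completeness, and I certify that last formula with a \textsc{drat} proof checked by \texttt{drat-trim}. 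For the output, I verify each of the $52$ units directly by multiplying $u\cdot u^{-1}$ in a normal form for $\P$. I then compute the support sizes and radii: $21$ on both sides, and inverse radius $5$ for the $16$ new units. The $36$ must match \cite{DLNV2026}. Finally I compute orbits under the (order $8$) generator-preserving automorphism group, finding two free orbits.

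\textbf{Main obstacle.} The hard step is making the unsatisfiability instance tractable. With $|\Ball4|$ in the low hundreds and $\Ball{R}$ much larger, the naive formula may be too big. I would try first to avoid the $y$ variables entirely: I would express invertibility via $M(u)$ and require $\det M(u)$ to be a monomial, which is a polynomial condition in the $x_g$ encodable by XOR circuits. Failing that, I would shrink $R$ by iterating: first prove no unit has inverse radius in $(5,R]$ using a cheaper parity or polytope argument, and then run SAT with $R=5$ or $6$.
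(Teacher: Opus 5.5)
Your architecture matches the paper's. You bound the inverse's support explicitly, encode the $\F_2$ convolution $\bar u\bar v=1$ as XOR constraints with a nontriviality clause and blocking clauses, certify the final \textsc{unsat} with \textsc{drat} and \texttt{drat-trim}, then verify the list, support sizes, radii and orbits without a solver. The real difference is the finite reduction.

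The paper (\S\ref{sec:box}) uses Craven--Pappas length symmetry for the syllable lengths $\ell^{(m)}_i$ of the decompositions $D_\infty=\langle P_m\rangle*\langle Q_m\rangle$, pulled back along the three projections $\pi_i$, and intersects the resulting boxes over $m$. This gives a $155$-element set $\BOX\supseteq\Ball4$ of word radius at most $8$, and that sharpness is what makes the instance solvable in practice. Your adjugate bound is more elementary and fully effective: it needs only that units of $\F_2[L]$ are monomials, and it works verbatim over $\Q$.

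To get a uniform $R$ you must pin down the ``shift''. The cleanest way: $M(gug^{-1})$ equals both $M(g)M(u)M(g)^{-1}$ and $D\,\sigma_g(M(u))\,D^{-1}$. Here $\sigma_g$ is the conjugation action of $g$ on $\F_2[L]$ and $D$ is diagonal with monomial entries. Hence $\sigma_g(\det M(u))=\det M(u)$ for all $g$. The only $K_4$-invariant monomial is $1$, so $\det M(u)=1$ and $M(u)^{-1}=\operatorname{adj}M(u)$. (A side correction: $\P$ is torsion-free, so it is a non-split extension of $(\Z/2)^2$ by $\Z^3$, not $\Z^3\rtimes(\Z/2)^2$. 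Nothing you use needs a splitting.) Even with no shift, the cofactor bound only places the inverse's coset components in threefold Minkowski sums of the entry supports of $M(u)$. That is thousands of positions rather than $155$.

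So the obstacle you flag is exactly the one the paper's box removes, and your main line does not yet remove it. Your second fallback is not an argument. That inverses stay within radius $5$ is an output of the census, and the sharpest a priori region available (the paper's $\BOX$) still reaches radius $8$. ``First exclude inverse radius in $(5,R]$ by a cheaper argument'' therefore presupposes what is to be proved.

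Your first fallback is a genuinely different route worth pursuing. Criterion \eqref{eq:det} characterizes unit-hood with no reference to $u^{-1}$, so a model in the $x_g$ alone needs no inverse region at all. It would require $\det M(u)=1$ coefficientwise: a degree-four system over $\F_2$, linearizable through auxiliary variables for the $2\times2$ minors in a Laplace expansion. The inverses, their sizes and their radii are then read off from $\operatorname{adj}M(u)$ for each unit found. The paper uses \eqref{eq:det} only as a verifier. Whether this model is easier for a CDCL solver than the bilinear convolution over $\Ball4\times\BOX$ is an empirical question your proposal leaves open.
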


\begin{theorem}[integral separation]\label{thm:integral}
No unit of $\Z[\P]$ with support in $\Ball4$ has nontrivial reduction modulo
$2$. Equivalently, every unit of $\Z[\P]$ supported in $\Ball4$ is congruent
modulo $2$ to a trivial unit. There is no restriction on the coefficients of
$u$ or on the support of $u^{-1}$.
\end{theorem}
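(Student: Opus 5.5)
Suppose $U\in\Z[\P]$ is a unit with $\supp(U)\subseteq\Ball4$, and write $\bar U$ for its image in $\F_2[\P]$. Since reduction modulo $2$ is a ring homomorphism, $\bar U$ is a unit of $\F_2[\P]$ whose support lies in $\supp(U)\subseteq\Ball4$. Nothing is assumed about $U^{-1}$, so the two-sided census of \cite{DLNV2026} does not apply. This is exactly why Theorem~\ref{thm:census} is needed. If $\bar U$ is nontrivial, it is one of the $52$ units listed there. So the plan is to eliminate these $52$ candidates one at a time. For each candidate $v$, we show that no integral unit supported in $\Ball4$ reduces to $v$.

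First we reduce the work using symmetry. The automorphisms of $\P$ preserving $\{a^{\pm1},b^{\pm1}\}$ preserve $\Ball4$ and commute with reduction modulo $2$. Hence it suffices to treat one representative from each orbit of the $52$ units. For the $16$ new units, Theorem~\ref{thm:census} gives two orbits. For the $36$ two-sided units, we would use the orbit decomposition from \cite{DLNV2026} or recompute it. We may also multiply by trivial units $\pm g$, but only when this keeps the support inside $\Ball4$, so that step needs care.

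Next comes the integral obstruction for a fixed representative $v$. The support of $v$ has $21$ elements, all with odd coefficients in $U$. The remaining elements of $\Ball4$ carry even coefficients in $U$. The aim is to find an invariant that a genuine unit must satisfy but that $v$ violates modulo $2$. The most natural tool is the augmentation together with a finite quotient. Take a homomorphism $\rho$ from $\P$ to a finite group $G$ (for instance a $2$-group quotient, or the holonomy quotient $\Z/2\times\Z/2$ extended by a finite quotient of the lattice), or a low-dimensional integral representation of $\P$. Then $\rho(U)$ must be a unit of $\Z[G]$, respectively of a matrix ring over $\Z$. In particular, its image under every one-dimensional character must be $\pm1$, or a root of unity for complex characters, and its determinant must be $\pm1$. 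Write $U=\tilde v+2W$, where $\tilde v$ is the $0/1$ lift of $v$ and $W$ is an arbitrary integral element supported in $\Ball4$. Then each character value of $U$ is congruent to the character value of $\tilde v$ modulo $2\Z[\zeta]$. If for some character the value of $\tilde v$ is not congruent to any root of unity modulo $2$, no integral lift exists. A concrete candidate to try first is the abelianization $\P\to\Z/4\times\Z/4$ with characters valued in $\Z[i]$. In $\Z[i]$, the units $\pm1,\pm i$ are all congruent to $1$ modulo $(1+i)$. So we would look for a character $\chi$ with $\chi(\tilde v)\notin\{\pm1,\pm i\}+2\Z[i]$.

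If no character obstruction works, we fall back on a finite computation: a norm or determinant condition in the $4$-dimensional or $8$-dimensional integral representation of $\P$ (the representation coming from its Bieberbach structure), read modulo $4$. This would be stated as a lemma that is checked by machine for each orbit representative.

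The main obstacle is that step. The invariant must rule out every integral lift, with unbounded coefficients and an unconstrained inverse. That forces the obstruction to be a congruence that depends only on $v$. Abelian characters may fail, because $v$ is a unit in $\F_2[\P]$, so its images in abelian quotients are already units modulo $2$. The real difficulty is then to find a $2$-adic obstruction one level deeper: a quotient ring $R$ of $\Z[\P]$ in which $v$ lifts modulo $2$ but no lift is invertible modulo $4$. If such an $R$ exists, it certifies Theorem~\ref{thm:integral} for all $52$ units by a finite check.
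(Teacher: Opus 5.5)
Your first step (reduce modulo $2$ and invoke Theorem~\ref{thm:census} to land in the $52$-element list) is essentially the paper's. The gap is in the obstruction step. You deliberately leave $U^{-1}$ unconstrained and look for an invariant of the residue $v$ alone. None of the tools you name yields one, and the object your last paragraph sets as the goal cannot exist.

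\emph{Characters.} One-dimensional characters factor through $\P^{\mathrm{ab}}\cong\Z/4\times\Z/4$ and take values in $\Z[i]$. Since $(\Z[i]/2)^\times=\{1,i\}$ is exactly the image of $\{\pm1,\pm i\}$, and $v$ is a unit modulo $2$, the value $\chi(\tilde v)$ is automatically congruent to a root of unity. Worse, for both representatives in Table~\ref{tab:orbitreps}, $P,Q,R$ map to $0$ and $S$ maps to the image of $y$ in $\F_2[\P^{\mathrm{ab}}]$. So $u_1$ and $u_2$ map to a group element, and even the sharpest abelian test is passed: $\Z[\Z/4\times\Z/4]$ has only trivial units \cite{Higman1940}, so the image of $v$ must be a group element, and it is.

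\emph{Affine representation.} Its determinant is $\det\rho(U)=\varepsilon(U)\chi_1(U)\chi_2(U)\chi_3(U)$, where $\varepsilon$ is the augmentation and $\chi_i$ are the holonomy characters. It therefore adds nothing, and in any case every odd integer is $\pm1$ modulo $4$.

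\emph{Your stated goal.} For any ring $R$, $2R/4R$ is a square-zero ideal of $R/4R$. Hence every lift of a unit of $R/2R$ is a unit of $R/4R$, and likewise of $R/2^kR$. A quotient ring in which $v$ lifts modulo $2$ but no lift is invertible modulo $4$ does not exist.

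The missing idea is the box of \S\ref{sec:box}. The theorem assumes nothing about $U^{-1}$, but the proof derives a constraint. Craven--Pappas length symmetry holds over $\Q$, hence for inverses of integral units. Applying it to every decomposition $D_\infty=\langle P_m\rangle*\langle Q_m\rangle$, pulled back along the three projections $\pi_i$, gives $\supp(U^{-1})\subseteq\BOX$, an explicit $155$-element set.

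Now write $U=u_0+2s$ and $U^{-1}=v_0+2t$ with $\supp s\subseteq\Ball4$ and $\supp t\subseteq\BOX$. The condition $UU^{-1}\equiv1\pmod 4$ becomes a finite $\F_2$-linear system in $(s,t)$. It is restrictive precisely because $t$ is confined: with $t$ free it is always solvable, by taking $s=0$ and $t=v_0e$ where $u_0v_0=1+2e$, since then $u_0(v_0+2v_0e)=1+4e+4e^2$. The paper finds this system infeasible for $48$ residues. For the remaining $4$, it enumerates all $2^{11}$ depth-one solutions and shows the mod-$8$ system is infeasible for each (Proposition~\ref{prop:closures}).

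So the obstruction is a $2$-adic lifting obstruction for the pair $(U,U^{-1})$ with both supports bounded, not an invariant of $v$. By discarding the inverse, your plan throws away the only source of rigidity the argument uses.
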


Theorem~\ref{thm:integral} separates $\Z[\P]$ from $\F_2[\P]$ at radius $4$,
the smallest radius at which the field conjecture fails. A separation in the
twisted-unitary sector, also with no bounds, is already in the companion paper
\cite{Tabei5}; what is new here is that no unitarity hypothesis is needed. The base cases of the integral conjecture up
to radius $3$ follow from Craven--Pappas \cite{CravenPappas2013} over every
field; at radius $4$ any proof must use a property of $\Z$ not shared by
$\F_2$, and Theorem~\ref{thm:integral} does so through the interplay of the
complete residue list of Theorem~\ref{thm:census} with exact congruence
computations modulo $4$ and $8$.

What Theorem~\ref{thm:integral} does \emph{not} do is equally important: it
says nothing about units congruent to a trivial unit modulo $2$ (``Case A'' in
the terminology of \cite{Tabei4}), for which no congruence method can suffice
\cite{Bartholdi2022}. The full unit conjecture for $\Z[\P]$ at radius
$4$ therefore remains open, reduced now to Case A.

\subsection*{Certification} Every positive claim (each unit, each inverse) is
verified by exact arithmetic independent of any solver; the completeness claim
of Theorem~\ref{thm:census} is a single unsatisfiability statement whose
\textsc{drat} proof ($5.7$\,GB, $2243$ RAT lemmas) was produced by
\texttt{kissat} and verified by \texttt{drat-trim}; the encoding passes
positive and negative controls (\S\ref{sec:census}). The congruence closures in
Theorem~\ref{thm:integral} use exact integer linear algebra only, with the
depth-two cases decided by exhaustive enumeration of the full $2^{11}$-element
solution space at depth one (\S\ref{sec:integral}), so no claim depends on a
solver's search being complete.

\section{Preliminaries}\label{sec:prelim}

We use the affine model of $\P$: elements are pairs $(s,t)$ with $s$ in the
diagonal Klein four-group $K_4=\{\operatorname{diag}(\pm1,\pm1,\pm1):
\det=1\}$ and $t\in\Z^3$, multiplied by $(s,t)(s',t')=(ss',t+st')$. The
translation subgroup $L=\langle x,y,z\rangle$, $x=a^2$, $y=b^2$, $z=(ab)^2$, is
free abelian of index $4$. All computations below re-derive the defining
relators and gate the model on every run.

\subsection{The determinant criterion}
$\Z[\P]$ is free of rank $4$ over $\Z[L]$, and left multiplication embeds
$\Z[\P]\hookrightarrow M_4(\Z[L])$ with $\Z[L]$ a Laurent polynomial ring in
three variables; the embedding is standard and is written out for exactly this
group by Passman \cite{Passman2021}; we claim no part of this. The criterion
below is Craven--Pappas \cite[Thm.~6.8]{CravenPappas2013}, specialized to the
fours group as their ``Determinant condition'' \cite[Thm.~8.5]{CravenPappas2013}
with the embedding at \cite[Thm.~8.6]{CravenPappas2013}; Passman's Proposition~1
is a computation of $\det$ for particular units over $\F_d$ rather than a
criterion. Since the units of $\Z[L]$ are exactly the elements
$\pm(\text{monomial})$, and a matrix over a commutative ring is invertible
exactly when its determinant is,
\begin{equation}\label{eq:det}
  u\in\Z[\P]^\times \iff \det M_u\in\{\pm\text{monomial}\},
\end{equation}
and the same holds over $\F_2$. Criterion \eqref{eq:det} decides unit-hood of
each explicit element with no reference to the inverse; we use it as the
solver-free verifier throughout.

\subsection{The inverse box}\label{sec:box}
That the inverse of a support-constrained unit is confined at all is not new,
and we claim no part of it. Pappas \cite{Pappas1988} isolated the relevant
condition, \emph{property} (U): a group algebra $KG$ has it when for every
finite $X\subset G$ there is a finite $Y(X)\subset G$ such that
$\supp\sigma\subseteq X$ implies $\supp\sigma^{-1}\subseteq Y(X)$ for every
unit $\sigma$. Friedman, Gustavson and Pappas proved that $KG$ has property
(U), over every field $K$, whenever $G$ has an abelian subgroup of finite
index \cite[Thm.~2.3]{FGP1995}, which covers $\P$; Craven--Pappas state the
conclusion for the fours group explicitly \cite[Thm.~14.3]{CravenPappas2013}.
Two remarks calibrate what that theorem supplies here. First, the finiteness
step of their argument \cite[Lemma~2.2]{FGP1995} is a proof by contradiction
--- a violating family of units would span an infinite-dimensional subspace of
a matrix algebra over the abelian part --- so the theorem asserts the
\emph{existence} of $Y(X)$ and provides neither a construction nor a bound.
Second, over a finite field the property as stated carries no information:
only finitely many elements of $\F_2[\P]$ are supported in a given finite
$X$, so a finite $Y(X)$ exists for cardinality reasons alone, and the notion
has content only over infinite $K$.\footnote{We thank Andre Nies for this
observation and for supplying a copy of \cite{FGP1995}.} What our integral
application (\S\ref{sec:integral}) needs is the case $K=\Q$, where the
theorem applies and is not vacuous; what it needs \emph{beyond} the theorem
is effectivity. The present section supplies that: an \emph{explicit}
$Y(\Ball4)$, small enough to be searched, together with a proof that the
particular $155$-element set we use is licensed by the length symmetry we
cite.

The three coordinate projections $(s,t)\mapsto(t_i,s_i)$ are homomorphisms
$\pi_i\colon\P\to D_\infty$, and $\P$ is an amalgam over $\ker\pi_i$ for each
$i$. Craven--Pappas length symmetry \cite[Thms.~4.9--4.10]{CravenPappas2013}
applies to $K[\P]$ for $K$ any field, and to $\Q$, hence to inverses of
integral units: if $uv=1$ then the maximum syllable length on $\supp v$ equals
that on $\supp u$.

The length in question is the \emph{syllable} length of the amalgam, not a word
length in $\P$, and $D_\infty$ carries many amalgam decompositions. Write
elements of $D_\infty$ as pairs $(n,e)$ with $(n,e)(n',e')=(n+(-1)^{e}n',e+e')$.
For each $m\in\Z$ the reflections $P_m=(m,1)$ and $Q_m=(m+1,1)$ are involutions
with $P_mQ_m=(-1,0)$, so $\langle P_m,Q_m\rangle=D_\infty$ and
$D_\infty=\langle P_m\rangle *\langle Q_m\rangle$ is a genuine decomposition;
write $\ell^{(m)}_i$ for the resulting syllable length pulled back along
$\pi_i$. Length symmetry holds for \emph{every} $m$, so with
$M^{(m)}_i=\max_{\Ball4}\ell^{(m)}_i$ the inverse of a unit supported in
$\Ball4$ lies in every one of the boxes
$\BOX^{(m)}=\{g:\ell^{(m)}_i(g)\le M^{(m)}_i\ \forall i\}$, hence in
\begin{equation}\label{eq:box}
  \BOX \;=\; \bigcap_{m\in\Z}\BOX^{(m)} .
\end{equation}
A single decomposition is far from enough: $\BOX^{(0)}$ already has $171$
elements and the boxes grow without bound as $|m|$ increases (for example
$|\BOX^{(\pm4)}|=477$). The intersection, however, stabilises immediately. We
computed $\BOX^{(m)}$ for $-8\le m\le 8$ by exact arithmetic inside $D_\infty$,
so that no truncation of a ball of $\P$ can corrupt a length, and enumerated
candidates over a ball of $\P$ large enough that enlarging it adds nothing:
\[
  |\BOX| = 155 ,
\]
attained already by intersecting $m=0$ with $m=-1$ and $m=1$. We also record
$\Ball4\subseteq\BOX$ and $\max_{\BOX}(\text{word radius})=8$. The computation
is \texttt{src/box\_syllable\_intersect.py}, with its output retained as
\texttt{results/box\_b4\_syllable\_intersect.json}.

\begin{remark}
An earlier version of this computation used the word length of $\pi_i(g)$ with
respect to $\{\pi_i(a)^{\pm1},\pi_i(b)^{\pm1}\}$ in place of $\ell^{(m)}_i$.
That is a different function --- $\pi_i(a)$ is a translation, of syllable length
$2$ --- and it is not the one Craven--Pappas bounds. It happens to select the
same $155$ elements, which is why the error survived; but the licence for the
bound comes from \eqref{eq:box}, not from the word metric, and we state it that
way.
\end{remark}

\section{The one-sided count}\label{sec:census}

\subsection{Discovery}
We ran an interleaved loop: (i) a propositional model whose solutions are
pairs $(\bar u,\bar v)$ of $0/1$ vectors over $\Ball4\times\BOX$ satisfying the
$\F_2$ convolution $\bar u\bar v=1$ (one XOR constraint per product class,
odd-support parity cuts, $|\supp\bar u|\ge2$); (ii) exact verification of each
solution by \eqref{eq:det} and exact convolution; (iii) closure of the list
under the symmetry group and under two-sided translations preserving the
support condition; (iv) blocking clauses for all known units, and repeat. The loop
produced no new unit after $52$; that this is genuinely exhaustive is not read
off the loop but proved in \S\ref{sec:census} by the unsatisfiability below.
Every unit was verified solver-free, and the $36$ two-sided ones match the
count of \cite{DLNV2026} exactly.

\subsection{The sixteen new units}
We note at once that the number $16$ also occurs in
\cite[\S3]{GardamComplex2023}, where it counts the non-trivial solutions over
$\C$ supported on Gardam's fixed $21$-element sets. That is a different set of
objects, over a different ring, and the coincidence is accidental.

Each of the $16$ new units $u$ has $|\supp(u)|=21$, and its inverse has
support size $21$ at radius exactly $5$: one-sidedness is genuinely realized at
radius $4$. Under the $8$-element group of automorphisms of $\P$ preserving
$\{a^{\pm1},b^{\pm1}\}$ --- symmetries alone, with no translation --- the $16$
units fall into two orbits of size $8$. Table~\ref{tab:orbitreps} lists one
representative of each orbit in the style of \cite{DLNV2026}: writing
$u=P+Qa+Rb+S\,ab$ with $P,Q,R,S\in\F_2[x^{\pm1},y^{\pm1},z^{\pm1}]$ in the
coordinates of \S\ref{sec:prelim}, and likewise
$u^{-1}=P'+Q'a+R'b+S'\,ab$. In all four displayed elements the coset-$1$
component is a full box $(1+x^{\pm1})(1+y^{\pm1})(1+z^{\pm1})$, and the two
representatives share their $ab$-component, as do their inverses; the full
$16$-element lists are in the ancillary files. Under the same group the $36$
two-sided units fall into five orbits and the full $52$ into seven. Allowing
two-sided translation as well merges these further, into three classes of
sizes $16$, $16$ and $20$.

\begin{table}[ht]
\centering\small
\renewcommand{\arraystretch}{1.25}
\begin{tabular}{@{}lll@{}}
\toprule
 & orbit-$1$ representative $u_1$ & orbit-$2$ representative $u_2$\\
\midrule
$P$ & $(1+x^{-1})(1+y^{-1})(1+z)$ & $(1+x)(1+y)(1+z)$\\
$Q$ & $x^{-2}+x^{-1}y^{-1}z^{-1}+x^{-1}z^{-1}+y^{-1}$
    & $x^{-1}y^{-1}+y^{-1}z^{-1}+z^{-1}+x$\\
$R$ & $x^{-1}y^{-1}z+x^{-1}+y^{-2}+y^{-1}z$
    & $x^{-1}y^{-1}+x^{-1}z+z+y$\\
$S$ & \multicolumn{2}{c}{$x^{-2}+x^{-1}y^{-1}+x^{-1}z^{-1}+x^{-1}y+1$
     \quad (common to both)}\\
\midrule
$P'$ & $(1+x)(1+y)(1+z^{-1})$ & $(1+x^{-1})(1+y^{-1})(1+z^{-1})$\\
$Q'$ & $x^{-1}+y^{-1}z^{-1}+z^{-1}+xy^{-1}$
     & $x^{-2}y^{-1}+x^{-1}y^{-1}z^{-1}+x^{-1}z^{-1}+1$\\
$R'$ & $x^{-1}z+x^{-1}y+y^{-1}+z$
     & $x^{-1}y^{-2}+x^{-1}y^{-1}z+y^{-1}z+1$\\
$S'$ & \multicolumn{2}{c}{$x^{-2}z^{-1}+x^{-1}y^{-1}z^{-1}+x^{-1}+x^{-1}yz^{-1}+z^{-1}$
     \quad (common to both)}\\
\bottomrule
\end{tabular}
\medskip
\caption{One representative per orbit of the $16$ one-sided units,
$u=P+Qa+Rb+S\,ab$ (supported in $\Ball4$), with inverse
$u^{-1}=P'+Q'a+R'b+S'\,ab$ (radius exactly $5$). Each entry was verified by
the determinant criterion \eqref{eq:det} and by exact convolution
$uu^{-1}=1$ (\texttt{src/onesided\_orbit\_reps.py}).}
\label{tab:orbitreps}
\end{table}

\subsection{Completeness}
Completeness of the list is the unsatisfiability of the discovery model
augmented with the $52$ blocking clauses. \texttt{kissat} proves it
unsatisfiable in $68$ minutes with proof logging enabled, and
\texttt{drat-trim} verifies the \textsc{drat} proof (\texttt{s VERIFIED},
$5.7$\,GB, $2243$ RAT lemmas in core, $6865$ seconds). The same statement was
obtained a second time under a different encoding, using native \textsc{xor}
constraints with a sequential-counter cardinality cut, and a different solver
(\texttt{pycryptosat}), which returned unsatisfiable after $19.2$ hours. That
run emits no proof object and so corroborates rather than certifies; we report
it because it exercises a different code path to the same conclusion.
As a positive control, forcing a known listed unit with the blocking clauses
removed yields SAT and the decoded inverse verifies exactly by \eqref{eq:det}.
Theorem~\ref{thm:census} follows. We remark that an independent structural
check --- closing the $52$ units under the symmetry group and translations up
to radius $2$ --- produces no new unit, consistent with completeness.

\begin{remark}
The one-sided count bounds the reach of translation arguments: all $52$
one-sided units at radius $4$ are translates-with-symmetries of just seven
orbit representatives, and none exists beyond the $36+16$ list. The question of
one-sided units at radius $5$ (where the two-sided count is already unknown)
remains open and appears substantially harder.
\end{remark}

\section{The integral separation theorem}\label{sec:integral}

Let $u\in\Z[\P]^\times$ with $\supp(u)\subseteq\Ball4$ and $\bar u$ nontrivial.
By \S\ref{sec:box}, $\supp(u^{-1})\subseteq\BOX$; reducing modulo $2$,
$(\bar u,\overline{u^{-1}})$ is a one-sided unit pair as in
\S\ref{sec:census}, so by Theorem~\ref{thm:census} $\bar u$ is one of the $52$
residues. Write $u=u_0+2s$ and $u^{-1}=v_0+2t$ with $u_0,v_0$ the $0/1$ lifts
and $\supp(s)\subseteq\Ball4$, $\supp(t)\subseteq\BOX$. The equation
$uu^{-1}=1$ implies, at each $2$-adic depth $k$, an $\F_2$-linear system for
$(s,t)$ modulo $2$ whose right-hand side is the defect of the previous depth.

\begin{proposition}\label{prop:closures}
For $48$ of the $52$ residues the depth-one system (equivalently,
$uu^{-1}\equiv1\bmod4$) is infeasible. For the remaining $4$ residues the
depth-one solution space is an affine $\F_2$-space of dimension $11$, and for
each of its $2048$ elements the depth-two system ($\bmod\ 8$) is infeasible.
\end{proposition}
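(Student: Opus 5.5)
The proof is a finite exact computation, organised by the $2$-adic lifting set up just before the statement. I would fix one of the $52$ residues $\bar u$ from Theorem~\ref{thm:census} together with its known inverse $\bar v$. Two-sided units have unique inverses, so $\bar v=\overline{u^{-1}}$ is forced, and its support lies in $\Ball4$ or in the radius-$5$ part of $\BOX$. I would take the $0/1$ lifts $u_0,v_0$, compute the integral convolution $u_0v_0=1+2d_1$ exactly over $\Z[\P]$, and record the defect $d_1$, which is supported in $\Ball4\cdot\BOX$. Writing $u=u_0+2s$ and $u^{-1}=v_0+2t$, the condition $uu^{-1}\equiv1\bmod 4$ reads
\[
 \bar d_1+\bar u_0\,\bar t+\bar s\,\bar v_0=0\quad\text{in }\F_2[\P].
\]
The unknowns are the coordinates of $\bar s\in\F_2^{\Ball4}$ and $\bar t\in\F_2^{\BOX}$, with one linear equation for each group element of $\Ball4\cdot\BOX$. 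I would assemble this system as a sparse $\F_2$-matrix and decide whether $\bar d_1$ lies in the column space, using Gaussian elimination over $\F_2$. Certification is easy in both directions. Infeasibility is certified by a left-kernel vector $w$ with $w\cdot\bar d_1=1$. Feasibility is certified by a particular solution together with a kernel basis.

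To reduce the work, I would exploit equivariance. If $\sigma$ is an automorphism preserving $\{a^{\pm1},b^{\pm1}\}$, then $\sigma$ preserves $\Ball4$ and $\BOX$, since $\BOX$ is defined through the word ball $\Ball4$ and the syllable lengths $\ell^{(m)}_i$. It therefore maps the system for $\bar u$ bijectively onto the system for $\sigma(\bar u)$. So it suffices to treat one residue per orbit, that is, the seven representatives. Still, I would run all $52$ residues as an independent consistency check. The expected result is infeasibility for $48$ residues, and for the remaining $4$ (presumably one orbit, or a union of orbits closed under the symmetry group) an affine solution set of dimension exactly $11$. Here I would compute the rank and check that the number of unknowns minus the rank equals $11$. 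A quick sanity check is that the kernel must contain the ``gauge'' directions $(\bar s,\bar t)=(\bar u_0\bar c,\ \bar c\,\bar v_0)$ that stay inside the supports, so the dimension should be at least their span.

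For those $4$ residues I would enumerate all $2^{11}=2048$ depth-one solutions $(\bar s,\bar t)$. For each one I would take the $0/1$ lifts $s_1,t_1$ and compute exactly
\[
 (u_0+2s_1)(v_0+2t_1)=1+4d_2 .
\]
The depth-two condition is then the same linear operator applied to the new right-hand side $\bar d_2$, with unknowns the next binary digits $(s_2,t_2)$. This step is correct because changing the lifts of $s_1,t_1$ by multiples of $2$ only shifts the depth-two unknowns, so no generality is lost. Since the operator is the same, its left kernel is computed once, and infeasibility for each of the $2048$ cases is witnessed by some kernel vector $w$ with $w\cdot\bar d_2=1$.

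The step I expect to be the main obstacle is this depth-two layer. The argument must rely on exhaustive enumeration rather than a structural reason, and it must be airtight that the lifting reduction is lossless. In particular, the coefficients of $u$ are unbounded, so I would verify that only $\bar s,\bar t$ enter at depth one, and only their residues plus the next digits enter at depth two, with $u$ fully general in $\Z[\P]$. I would also cross-check the $\F_2$ rank computations with an independent implementation.
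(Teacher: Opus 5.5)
Your proposal is correct and is essentially the paper's proof: exact linear algebra over $\F_2$ on the supports $(\Ball4,\BOX)$ at depth one, then an exhaustive pass over all $2^{11}$ depth-one solutions at depth two, reusing the same coefficient matrix with only the right-hand side changing. The one step stated too quickly is the symmetry shortcut: $\sigma(\BOX)=\BOX$ requires the observation that $\sigma$ permutes the family $\{\ell^{(m)}_i\}$ rather than fixing each member, but since you run all $52$ residues anyway, nothing rests on it.
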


\begin{proof}
Exact integer linear algebra over the supports $(\Ball4,\BOX)$. The
coefficient matrix at depth two is the same as at depth one --- only the
right-hand side changes with the chosen depth-one solution --- which is what
makes the $2048$-fold enumeration cheap; the enumeration is exhaustive, so the
conclusion does not depend on any search heuristic. Full logs and the
verification scripts accompany the paper.
\end{proof}

\begin{remark}
The companion paper \cite{Tabei4} reports that the congruence attack at radius
$4$ does not close even its flagship pattern, and concludes that the lifting
question there is finite only once the coefficient height is bounded. There is
no contradiction. That pilot climbs a determinant tower over a pool of radius
$5$ with the monomial position of $\det$ held fixed, so it is asking a harder
question than we ask here; the present argument instead fixes the two supports
to $(\Ball4,\BOX)$ --- which \S\ref{sec:box} licenses --- and works directly
with $uu^{-1}=1$, where the unknowns are linear and no height enters.
\end{remark}

\begin{proof}[Proof of Theorem~\ref{thm:integral}]
A unit $u$ as above yields, for every $k$, solutions of the depth-$k$ system
with supports $(\Ball4,\BOX)$. Proposition~\ref{prop:closures} rules this out
at $k=1$ for $48$ residues and at $k=2$ for the remaining $4$.
\end{proof}

\begin{remark}[scope]
The proof is uniform in the coefficients: no height bound enters, because the
congruence conditions factor through $\Z/4$ and $\Z/8$. The two ingredients
that make the argument finite are the completeness of the residue list
(Theorem~\ref{thm:census}) and the box lemma. Case A --- units congruent to a
trivial unit modulo $2$ --- is untouched, necessarily so: congruence conditions
modulo $2^k$ cannot distinguish $g$ from $g+2^k w$, and approximate units exist
modulo every $n$ \cite[Prop.~3.1]{Bartholdi2022}. We know of no method that reaches it.
\end{remark}

\section{Reproducibility}

All artifacts accompany the paper: the supports of the $52$ units
(\texttt{census52.json}; the inverses are recomputed in seconds from the
supports by exact linear algebra over $\F_2$, and the scripts to do so are
included), the completeness CNF generator
(\texttt{complete53\_kissat.py}) together with the \texttt{drat-trim}
transcript, the second, independent completeness run with its log
(\texttt{unit53\_completeness.py}), the box computation of
\S\ref{sec:box} (\texttt{box\_syllable\_intersect.py}, with the
per-decomposition boxes and the intersection retained), the orbit
decomposition and the data of
Table~\ref{tab:orbitreps} (\texttt{onesided\_orbit\_reps.py}, with its log
and its JSON output), and the depth-two closure
(\texttt{depth2\_closure\_rigorous.py} with its log). Every unit and every inverse is checkable in seconds by the
determinant criterion \eqref{eq:det} with exact arithmetic; no computer algebra
system is required.

\end{document}